\newcommand{\cA}{{\mathcal A}}
\newcommand{\cB}{{\mathcal B}}
\newcommand{\cC}{{\mathcal C}}
\newcommand{\cD}{{\mathcal D}}
\newcommand{\cE}{{\mathcal E}}
\newcommand{\cH}{{\mathcal H}}
\newcommand{\cK}{{\mathcal K}}
\newcommand{\cM}{{\mathcal M}}
\newcommand{\cN}{{\mathcal N}}
\newcommand{\cS}{{\mathcal S}}
\newcommand{\bT}{{\mathbb{T}}}
\newcommand{\bG}{{\mathbb G}}
\newcommand{\bC}{{\mathbb C}}
\newenvironment{psmallmatrix}
{\left(\begin{smallmatrix}}
	{\end{smallmatrix}\right)}
\newtheorem{thm}{Theorem}[section]
\newtheorem{corollary}[thm]{Corollary}
\newtheorem{lemma}[thm]{Lemma}
\theoremstyle{definition}
\newtheorem{definition}[thm]{Definition}
\newtheorem{remark}[thm]{Remark}
\newcommand\blfootnote[1]{%
	\begingroup
	\renewcommand\thefootnote{}\footnote{#1}%
	\addtocounter{footnote}{-1}%
	\endgroup
}
\numberwithin{equation}{section}
\author[Bhowmik and Kumar]{Mainak Bhowmik and Poornendu Kumar}
\address{Department of Mathematics\\
	Indian Institute of Science\\
	Bangalore, 560012, India} 
	\email{mainakb@iisc.ac.in; mainak.bhowmik943@gmail.com}
\address{Department of Mathematics\\
	University of Manitoba\\
Winnipeg, Manitoba, R3T 2N2, Canada.}
\email{poornendu.kumar@umanitoba.ca}
\begin{document}
	\title[Realization of operator-valued Herglotz function]{Herglotz representation for operator-valued function on a set associated with test functions}
\maketitle

	{\blfootnote{{2020 {\em Mathematics Subject Classification}: 47A48, 47A56, 32A38.  \\
			{\em Key words and phrase}:  Herglotz representation, Realization formulae, Krein spaces, Test functions, Kolmogorov decomposition, Herglotz-Agler class.}}}

\begin{abstract}
	
	The Herglotz representation theorem for holomorphic functions with non-negative real part is a fundamental result in the theory of holomorphic functions. In this paper, we reinterpret the Herglotz representation in the context of modern techniques, specifically realization formula. This reinterpretation is then extended to operator-valued functions on arbitrary sets, in association with a collection of test functions.

\end{abstract}
\section{Introduction}

Given a regular Borel measure $\mu$ on the unit circle $\mathbb{T}$, the function
\begin{equation}
	f(z)= i \textup{Im}f(0) + \int_{\mathbb T} \frac{\alpha+z}{\alpha-z}d\mu(\alpha) \label{Her}\\
\end{equation}
is holomorphic on the open unit disc $\mathbb{D}$ with a non-negative real part. Interestingly, any holomorphic function on $\mathbb{D}$ having non-negative real part can be uniquely expressed through a positive regular Borel measure on $\mathbb{T}$ as depicted in equation \eqref{Her}. This fundamental result is attributed to Herglotz \cite{Her}. Moreover, if we assume the normalizing condition $f(0)=1$, the measure $\mu$ corresponds to a regular Borel probability measure on $\mathbb{T}$. 

\begin{definition} 
	Given a domain $\Omega$ in $\mathbb C^d$ and a Hilbert space $\mathcal E$, the Herglotz class $\mathfrak H(\Omega, \mathcal B (\mathcal E))$ consists of all $\mathcal B(\mathcal E)$-valued holomorphic functions on $\Omega$ which have non-negative real parts.
\end{definition}
  It is natural to inquire whether we can generalize the Herglotz representation for operator-valued functions on arbitrary domains. Achieving such a integral represenation on any general domain is typically challenging. The main difficulty is to have suitable integrand for the representation.
 Nevertheless, a version of Herglotz's integral representation is known for multi-connected domains \cite{AHR, BH, KH}, the polydisc \cite{Kor-Puk}, the Euclidean unit ball \cite{Kor-Puk, Jury, MP}, and domains possessing certain convexity properties \cite{AZ}  where the integrand is concocted with the reproducing kernel of the Hardy Hilbert spaces of these domains. In this note, the Herglotz representation is extended  for a  class of  functions on an arbitrary set by employing the theory of operators on Hilbert spaces, subject to certain conditions on the set.
 
 The power of Herglotz's theorem is well-known - see for example \cite{DM} where the Herglotz representation has been crucially used in showing failure of rational dilation on a triply connected planar domain
 or \cite{AHR} for a proof of the von Neumann inequality using it. Herglotz’s theorem has applications to de Branges-Rovnyak spaces (see the seminal work \cite{Sar}). Also, it has applications to two-phase composite materials, where the scalar-valued Herglotz functions correspond to the effective properties of the composite materials and the operator-valued Herglotz functions are applied to study the permeability tensor of a porous material, see \cite{OL}. Recently, the Carath\'eodory approximation result was proven in \cite{BBK} using Herglotz's theorem.

 Note that the equation \eqref{Her} connects with the theory of operators on a Hilbert space in an obvious way. By considering $L^2(\mathbb{T}, \mu)$ and the normal operator $U$ (with spectrum contained in the unit circle and hence a unitary operator in this case) of multiplication by the co-ordinate $z$, the equation above takes the form
\begin{equation}
	f(z) = i \textup{Im}f(0) + (\langle (U + z)(U - z)^{-1} 1 , 1 \rangle) \textup{Re}f(0) \label{RF}
\end{equation}
where $1$ is the constant function $1$.  Conversely, for any unitary operator $U$ on some Hilbert space $\cM$ and a unit vector $\xi$ the function
\begin{equation*}
	f(z) = i \textup{Im}f(0) + \langle (U + z)(U - z)^{-1} \xi , \xi\rangle_{\cM} \textup{Re}f(0) 
\end{equation*}
is in the Herglotz class. We shall call \eqref{RF} a {\em realization formula} for the Herglotz class functions.

The realization formula \eqref{RF} has been extended to the bidisc by Agler, Tully-Doyle, and Young \cite{Agler, AY-Book}. Additionally, Ball and Kaliuzhnyi-Verbovetskyi \cite{Ball-V} have generalized it for a subclass of $\mathfrak H(\mathbb{D}^d)$, including extensions to operator-valued functions. Moreover, it has been explored in non-commutative setups \cite{PPD}. In this note, by leveraging the realization formula on a domain associated with the test functions which we will discuss this in more detail in Section \ref{Re}, along with Krein space theory and the Kolmogorov decomposition, we extend the Herglotz representation \eqref{RF} to arbitrary domains associated with the test functions, which is a more modern theme now. The main theorem of this paper is the following:  
\begin{thm}\label{Main}
	Let $\Psi$ be a collection of test functions on a set $X$ such that there exists a point $x_0\in X$ with $\psi(x_0)=0$ for all $\psi\in \Psi$. Let $\mathcal{E}$ be a Hilbert space and $f:X\rightarrow \mathcal{B}(\mathcal{E})$ be a function. Then the following are equivalent:
	
	\begin{itemize}
		\item[\textbf{[H]}]
		$f$ is a $\Psi$-Herglotz-Agler class function;
		
		\item [\textbf{[D]}]
		there exists a completly positive kernel $\Lambda: X\times X\rightarrow \mathcal{B}\left( C_b(\Psi), \mathcal{B}(\cE)\right)$ such that
		\begin{align}\label{EqnD2}
			f(x)+ f(y)^* = \Lambda (x,y) \left(1-  E(x)\overline{E(y)} \right)
		\end{align}
		
		\item [\textbf{[R]}] 
		there exist a Hilbert space $\mathcal{K}$, a unital $*$-representation $\pi: C_b(\Psi)\rightarrow\mathcal{B}(\mathcal{K})$ and a bounded colligation operator matrix
		$$U= \begin{bmatrix}
			A & B \\
			C & D
		\end{bmatrix}: \mathcal{E}\oplus\mathcal{K}\rightarrow \mathcal{E}\oplus\mathcal{K}$$
		such that 	\begin{align} \label{realization-general}
			f(x) = A + B\pi\left(E(x)\right) \left(I- D\pi\left(E(x)\right)\right)^{-1}C
		\end{align}
		where $D^*D= DD^*= I_\mathcal{K}$, $D^*C= B^*$ and $C^*C=BB^*= A+A^*$.
	\end{itemize}
\end{thm}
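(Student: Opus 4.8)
The strategy is to run the cycle $\mathrm{[H]}\Rightarrow\mathrm{[D]}\Rightarrow\mathrm{[R]}\Rightarrow\mathrm{[H]}$; the implication $\mathrm{[R]}\Rightarrow\mathrm{[D]}$ falls out directly along the way, so in effect one obtains $\mathrm{[H]}\Leftrightarrow\mathrm{[D]}$ and $\mathrm{[D]}\Leftrightarrow\mathrm{[R]}$. I write $E(x)\in C_b(\Psi)$ for the evaluation function $E(x)(\psi)=\psi(x)$, so that $1-E(x)\overline{E(y)}$ is the function $\psi\mapsto 1-\psi(x)\overline{\psi(y)}$ and $E(x_0)=0$ because $\psi(x_0)=0$ for every $\psi\in\Psi$; recall that a kernel $k$ on $X$ is $\Psi$-admissible when $\bigl(1-\psi(x)\overline{\psi(y)}\bigr)k(x,y)\succeq 0$ for all $\psi\in\Psi$, and that $f$ lies in the $\Psi$-Herglotz--Agler class precisely when $\bigl(f(x)+f(y)^{*}\bigr)k(x,y)$ is a positive $\mathcal B(\mathcal E)$-valued kernel for every $\Psi$-admissible $k$.

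For $\mathrm{[R]}\Rightarrow\mathrm{[D]}$ (hence $\mathrm{[R]}\Rightarrow\mathrm{[H]}$) I would carry out the transfer-function computation: set $T_x=\pi(E(x))$, use $D^{*}D=DD^{*}=I$, $C=DB^{*}$ and $A+A^{*}=BB^{*}=C^{*}C$, move $D$ across resolvents via $(I-DT_x)^{-1}D=D(I-T_xD)^{-1}$, and telescope $I+T_xD(I-T_xD)^{-1}=(I-T_xD)^{-1}$; together with $\pi(E(x))\pi(E(y))^{*}=\pi(E(x)\overline{E(y)})$ and $DD^{*}=I$ this gives
\begin{align*}
 f(x)+f(y)^{*}=B\,(I-T_xD)^{-1}\bigl(I-\pi(E(x)\overline{E(y)})\bigr)(I-D^{*}T_y^{*})^{-1}B^{*}.
\end{align*}
Thus $\Lambda(x,y)(a):=B(I-T_xD)^{-1}\pi(a)(I-D^{*}T_y^{*})^{-1}B^{*}=V(x)^{*}\pi(a)V(y)$ with $V(x)=\bigl(B(I-T_xD)^{-1}\bigr)^{*}$ is completely positive and satisfies \eqref{EqnD2}, which is $\mathrm{[D]}$. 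That $\mathrm{[D]}\Rightarrow\mathrm{[H]}$ is then short: for $\Psi$-admissible $k$, forming the Schur product of $f(x)+f(y)^{*}=\Lambda(x,y)(1-E(x)\overline{E(y)})$ with $k$ and using complete positivity of $\Lambda$ against the pointwise positivity built into admissibility shows $\bigl(f(x)+f(y)^{*}\bigr)k(x,y)\succeq0$.

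The core difficulty is $\mathrm{[H]}\Rightarrow\mathrm{[D]}$: one must manufacture the abstract completely positive kernel $\Lambda$ out of nothing but the ``testing'' hypothesis. I would argue by cone separation in the spirit of Dritschel--McCullough. The $\Psi$-admissible kernels form a cone, $\mathrm{[H]}$ says that $f(x)+f(y)^{*}$ pairs non-negatively with every member of it, and a bipolar/Hahn--Banach argument — with a weak-$*$ compactness step to secure closedness of the relevant cone and an Arveson-type extension to upgrade mere positivity to complete positivity — yields a completely positive $\Lambda\colon X\times X\to\mathcal B\bigl(C_b(\Psi),\mathcal B(\mathcal E)\bigr)$ realizing \eqref{EqnD2}. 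Alternatively, since $\operatorname{Re}f\ge0$ forces $I+f$ to be boundedly invertible, one can pass to the Cayley transform $g:=(I-f)(I+f)^{-1}$, which lies in the $\Psi$-Schur--Agler class, apply the realization/representation theory already available for functions associated with test functions, and transform the data back; the price is verifying that the admissible-kernel conditions and the colligation relations transport correctly under the Cayley map.

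Finally, for $\mathrm{[D]}\Rightarrow\mathrm{[R]}$ I would invoke the Kolmogorov decomposition of $\Lambda$ to obtain a Hilbert space $\mathcal K$, a unital $*$-representation $\pi\colon C_b(\Psi)\to\mathcal B(\mathcal K)$ and maps $V(x)\colon\mathcal E\to\mathcal K$ with $\Lambda(x,y)(a)=V(x)^{*}\pi(a)V(y)$ and $\{\pi(a)V(x)\xi\}$ total in $\mathcal K$. Rearranging \eqref{EqnD2} produces the fundamental identity
\begin{align*}
 f(x)+f(y)^{*}+W(x)^{*}W(y)=V(x)^{*}V(y),\qquad W(x):=\pi(E(x))^{*}V(x),
\end{align*}
and, because the sesquilinear form $\bigl\langle(\xi_1,\xi_2),(\eta_1,\eta_2)\bigr\rangle_J=\langle\xi_1,\eta_2\rangle+\langle\xi_2,\eta_1\rangle$ attached to $f(x)+f(y)^{*}$ is indefinite, this identity is read as saying that a densely defined map between two subspaces of the Krein space $\mathcal K\oplus(\mathcal E\oplus\mathcal E,J)$ is a Krein-space isometry. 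Extending it to a Krein-space unitary — enlarging $\mathcal K$ if necessary, which is exactly where Krein-space geometry is needed — and then closing the loop with the $\pi(E(x))$-feedback in the usual colligation manner gives $U=\sbm{A&B\\C&D}$ satisfying \eqref{realization-general}, while the constraints $D^{*}D=DD^{*}=I$, $D^{*}C=B^{*}$, $C^{*}C=BB^{*}=A+A^{*}$ are precisely the algebraic shadow of $U$ being the colligation of a $J$-unitary; since $E(x_0)=0$ one gets $A=f(x_0)$, so the realization is consistent at $x_0$. I expect $\mathrm{[H]}\Rightarrow\mathrm{[D]}$ to be the hardest step — either the closedness of the cone in the direct argument, or, via the Cayley route, the bookkeeping needed to transport the test-function realization — with the Krein-space extension in $\mathrm{[D]}\Rightarrow\mathrm{[R]}$ a close second, since the defect here is genuinely indefinite rather than one-sided.
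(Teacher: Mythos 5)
Your overall roadmap matches the paper's --- the cycle $\textbf{[H]}\Rightarrow\textbf{[D]}\Rightarrow\textbf{[R]}\Rightarrow\textbf{[D]}\Rightarrow\textbf{[H]}$, Cayley transform to reduce to the $\Psi$-Schur--Agler realization theorem, Kolmogorov decomposition, then a Krein-space graph-space argument to produce the colligation --- and your $\textbf{[R]}\Rightarrow\textbf{[D]}$ computation is essentially the one in the paper. But three points need repair.

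First, you recall the $\Psi$-Herglotz--Agler condition as ``$(f(x)+f(y)^{*})k(x,y)$ is a positive kernel,'' but the operator-valued definition is that $f(x)k(x,y)+k(x,y)f(y)^{*}$ is a positive kernel: the adjoint sits on the right of $k$, not the left. The two agree only when $f$ commutes with $k$, e.g.\ in the scalar case. This matters for $\textbf{[D]}\Rightarrow\textbf{[H]}$: the decomposition $f(x)+f(y)^{*}=\Lambda(x,y)\bigl(1-E(x)\overline{E(y)}\bigr)$ is symmetric, while the Herglotz--Agler test is asymmetric, so a bare Schur-product argument does not deliver the needed positivity. The paper avoids this by Cayley-transforming once more: the same $\Lambda$ yields a completely positive decomposition for $F=(I-f)(I+f)^{-1}$, whence $F$ is $\Psi$-Schur--Agler by the known characterization, and one transports the conclusion back through the Cayley map. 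You should do the same rather than invoke a ``Schur product.''

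Second, in $\textbf{[D]}\Rightarrow\textbf{[R]}$ you use $E(x_0)=0$ only at the end to note $A=f(x_0)$, but it is needed much earlier and more crucially: after extending the $J$-isotropic subspace $\cN$ to a Lagrangian $\cN'$ and writing $\cN'$ as the graph of a closed operator with domain $\cD\subset\cE\oplus\cK'$, one uses $\rho(E(x_0))=0$ to see $\cE\oplus\{0\}\subset\cD$, and only then can one invoke a signature-matrix variant of Lemma~3.4 of \cite{Ball-IEOT} to conclude $\cD=\cE\oplus\cK'$, that the graph operator is bounded, and that its block entries satisfy the stated algebraic relations. Your ``extend to a Krein-space unitary, enlarging $\cK$ if necessary'' skips precisely this domain-and-boundedness step, which is the crux of the Krein-space argument; without it the colligation operator could a priori be unbounded. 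Lastly, for $\textbf{[H]}\Rightarrow\textbf{[D]}$ the cone-separation route you lead with would in effect re-prove the Schur--Agler realization theorem from scratch; the paper (and your stated alternative) simply Cayley-transforms and cites it, which is the economical path.
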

\noindent All the necessary terms in the aforementioned theorem will be discussed in the following section. Additionally, some particular cases of  \cref{Main} as well as examples will be provided in the last section.

Finally, we extend our heartfelt thanks to Prof. Joe Ball and  Prof. Tirthankar Bhattacharyya for valuable discussions and insights.

\section{The Realization Formula and The Kolmogorov Decomposition} \label{Re}

A remarkable result in function-theoretic operator theory is the \emph{realization formula}, which asserts that any holomorphic function \(\varphi: \mathbb{D} \rightarrow \overline{\mathbb{D}}\) can be represented as

$$
\varphi(z)=A +zB(I-zD)^{-1}C 
$$
for an isometry $ U= \left[\begin{smallmatrix}
	A & B \\
	C & D 
\end{smallmatrix}\right]$ on $\mathbb{C}\oplus\cH$, where $\cH$ is a Hilbert space. A holomorphic function $\theta$ on a domain $\Omega \subseteq \mathbb{C}^d$, taking values in $\cB(\cE)$, the $C^*$-algebra of bounded linear operators on some Hilbert space $\cE$, is called a {\em Schur class function} if $\|\theta(z) \| \leq 1$ for all $z$ in $\Omega$. The realization formula for Schur class functions has been extended to various domains such as an annulus \cite{DM}, the bidisc \cite{AM-Book}, the complex unit ball \cite{Ball-Bolo}, and the symmetrized bidisc \cite{SYM_Real, Tirtha-Hari-JFA}. Notably, not every Schur class function on the polydisc $\mathbb{D}^d$ ($d\geq 3$) possesses a realization formula. However, a specific subclass, known as the {\em Schur-Agler class}, of the Schur class on $\mathbb{D}^d$ does; see \cite{Agler}. This concept has further been generalized to an abstract setting where the domain $\Omega$ is replaced by a set $X$, and the Schur class is replaced by a particular class of functions depending on a collection of test functions $\Psi$ defined on $X$. This class is referred to as the {\em $\Psi$-Schur-Agler class}.  Additionally, explore recent interesting work on this class of functions in the operator-valued setting \cite{BBD,  BPT, CHL, Hartz, K2, K3, KRT}.

\noindent A collection $\Psi$ of $\bC$-valued functions on a set $X$ is called a set of {\em
	test functions} (see \cite{DM}) if the following conditions hold:
\begin{enumerate}
	\item $\sup_{\psi\in \Psi} |\psi(x)|<1$ for all $x\in X$;
	\item for each finite subset $F$ of $X$, the collection $\{\psi|_{F}: \psi\in \Psi\}$ together with the constant functions generates the algebra of all $\bC$-valued functions on $F$.
\end{enumerate}
The collection $\Psi$ inherits a subspace topology of the space of all bounded functions from $X$ to $\overline{\mathbb D}$ endowed with topology of pointwise convergence. We shall denote the algebra of bounded continuous functions over $\Psi$ with pointwise algebra operation by $C_b(\Psi)$. Define an injective mapping $E:X\rightarrow C_b(\Psi)$ as $E(x)= e_x$, where $e_x(\psi)=\psi(x).$

\begin{definition}
Let $\cE$ be a Hilbert space. A function $k: X\times X \rightarrow \mathcal{B}(\cE)$ is said to be a $\mathcal{B}(\cE)$-valued positive kernel if for any $n \geq 1$, any $n$ points $x_1, \dots, x_n$ in $X$ and any $n$ vectors $e_1, \dots, e_n$ in $\cE$ we have
$$ \sum_{i,j=1}^n \langle k(x_i, x_j)e_j, e_i \rangle \geq 0 .$$ 
We define a map $\Lambda: X \times X \to \mathcal{B}(C_b(\Psi), \mathcal{B}(\cE))$ to be a completely positive kernel if
\begin{align*}
	\sum_{i,j=1}^n T_i^* \Lambda(x_i, x_j)\left(\overline{f_i}f_j \right)T_j\geq0
\end{align*}for any $n\geq 1$, any $n$ points $x_1, \dots, x_n\in X $, any $n$ operators $T_1, \dots, T_n $ in $\cB(\cE)$ and any $n$ functions $g_1, \dots, g_n$ in $C_b(\Psi)$.
\end{definition}
\noindent Now, we shall state the famous Kolmogorov decomposition of a completely positive kernel which is one of the key ingredients for the proof of our main result.
\begin{thm}[Kolmogorov Decomposition  \cite{BH}]\label{KD}
	Let $\cE$ be a Hilbert space. Then, $\Lambda: X\times X\rightarrow\mathcal{B}\left(C_b(\Psi), \cB(\cE)\right)$ is a completely positive kernel if and only if there exist an auxiliary Hilbert space $\cK$, a map $h: X \rightarrow \cB(\cK, \cE)$ and a unital $*$-representation $\rho: C_b(\Psi)\rightarrow \cB(\cK)$ such that
	$$\Lambda(x, y)(g)= h(x)  \rho(g) h(y)^* $$
	for all $x, y\in X$ and $g\in C_b(\Psi)$. 	
\end{thm}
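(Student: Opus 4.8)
The plan is to run the Gelfand--Naimark--Segal / Kolmogorov construction adapted to the commutative unital $C^*$-algebra $C_b(\Psi)$. The implication from the factorization to complete positivity is an immediate computation: if $\Lambda(x,y)(g)=h(x)\rho(g)h(y)^*$ with $\rho$ a unital $*$-representation, then for any $x_1,\dots,x_n\in X$, $T_1,\dots,T_n\in\cB(\cE)$ and $g_1,\dots,g_n\in C_b(\Psi)$ one uses $\rho(\overline{g_i}g_j)=\rho(g_i)^*\rho(g_j)$ to rewrite
\[
\sum_{i,j=1}^{n}T_i^*\,\Lambda(x_i,x_j)(\overline{g_i}g_j)\,T_j
=\Bigl(\sum_{i=1}^{n}\rho(g_i)h(x_i)^*T_i\Bigr)^*\Bigl(\sum_{j=1}^{n}\rho(g_j)h(x_j)^*T_j\Bigr)\ge 0 ,
\]
which is the defining inequality of a completely positive kernel.

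For the converse, I would first observe that the defining inequality is equivalent to the scalar form
\[
\sum_{i,j=1}^{n}\bigl\langle \Lambda(x_i,x_j)(\overline{g_i}g_j)v_j,\,v_i\bigr\rangle_{\cE}\ \ge\ 0
\qquad(x_i\in X,\ g_i\in C_b(\Psi),\ v_i\in\cE),
\]
obtained by specializing the $T_i$ to rank-one operators $u\mapsto\langle u,e_0\rangle v_i$ (and, conversely, evaluating the operator inequality on vectors $T_iu$). Next I would form the algebraic tensor product $\cK_0=C_b(\Psi)\otimes\bigl(\bigoplus_{x\in X}\cE\bigr)$; writing $\delta_x v$ for $v\in\cE$ placed in the $x$-th summand, a typical element is a finite sum $\sum_k g_k\otimes\delta_{x_k}v_k$, and I equip $\cK_0$ with the sesquilinear form
\[
\Bigl\langle \sum_k g_k\otimes\delta_{x_k}v_k,\ \sum_l g_l'\otimes\delta_{y_l}v_l'\Bigr\rangle_0
=\sum_{k,l}\bigl\langle \Lambda(y_l,x_k)(\overline{g_l'}\,g_k)v_k,\,v_l'\bigr\rangle_{\cE}.
\]
Bilinearity of $(x,y,g)\mapsto\Lambda(x,y)(g)$ makes this independent of the chosen representation, the scalar inequality above gives $\langle\xi,\xi\rangle_0\ge 0$ (in particular real), and that forces Hermitian symmetry by polarization. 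Passing to $\cK_0/\cN$ with $\cN=\{\xi:\langle\xi,\xi\rangle_0=0\}$ and completing gives a Hilbert space $\cK$.

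The representation $\rho$ would be the operator on $\cK$ induced by the left multiplication $\rho_0(\phi)\colon g\otimes\delta_x v\mapsto(\phi g)\otimes\delta_x v$ on $\cK_0$; granting the boundedness of $\rho_0(\phi)$ addressed below, this descends to $\cK_0/\cN$ and extends to $\cB(\cK)$, is visibly a unital algebra homomorphism, and is $*$-preserving since $\overline{\overline{\phi}\,g}=\phi\,\overline{g}$ yields $\langle\rho_0(\phi)\xi,\eta\rangle_0=\langle\xi,\rho_0(\overline\phi)\eta\rangle_0$. The map $h$ would be produced through its adjoint: set $L_x\colon\cE\to\cK$, $L_xv=[\,1\otimes\delta_x v\,]$, which is bounded because $\|L_xv\|^2=\langle\Lambda(x,x)(1)v,v\rangle\le\|\Lambda(x,x)(1)\|\,\|v\|^2$, and put $h(x)=L_x^*$. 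Then, unravelling the definitions, for $v,w\in\cE$,
\[
\langle h(x)\rho(g)h(y)^*v,\,w\rangle_{\cE}=\bigl\langle [\,g\otimes\delta_y v\,],\,[\,1\otimes\delta_x w\,]\bigr\rangle_0=\langle\Lambda(x,y)(g)v,\,w\rangle_{\cE},
\]
which is exactly $\Lambda(x,y)(g)=h(x)\rho(g)h(y)^*$.

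The one genuinely non-formal step -- and the main obstacle -- is showing $\rho_0(\phi)$ is bounded, which is what guarantees $\rho_0(\phi)\cN\subseteq\cN$ and the continuity of $\rho(\phi)$. I would deduce it from the $C^*$-algebra structure of $C_b(\Psi)$: when $\|\phi\|_\infty\le 1$ the element $1-\overline\phi\phi$ is nonnegative in $C_b(\Psi)$, hence has a positive square root $\chi\in C_b(\Psi)$, and then for $\xi=\sum_k g_k\otimes\delta_{x_k}v_k$,
\[
\|\xi\|_0^2-\|\rho_0(\phi)\xi\|_0^2=\sum_{k,l}\bigl\langle \Lambda(x_l,x_k)\bigl((1-\overline\phi\phi)\,\overline{g_l}g_k\bigr)v_k,\,v_l\bigr\rangle=\|\rho_0(\chi)\xi\|_0^2\ge 0 ,
\]
so $\|\rho_0(\phi)\xi\|_0\le\|\phi\|_\infty\|\xi\|_0$ by homogeneity. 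The remaining points (well-definedness of the form on the tensor product, boundedness of $h$, $\rho(1)=I_\cK$, and the edge case $\Lambda\equiv 0$) are routine.
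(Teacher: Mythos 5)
Your proposal is correct. Note that the paper does not prove this statement at all: Theorem \ref{KD} is quoted with a citation to \cite{BH}, so there is no in-paper argument to compare against. What you have written is a self-contained proof by the standard GNS/Stinespring-type construction, and it is essentially the argument underlying the cited source: the easy direction via $\rho(\overline{g_i}g_j)=\rho(g_i)^*\rho(g_j)$ and a sum-of-squares factorization; the converse by building the pre-Hilbert space $C_b(\Psi)\otimes\bigl(\bigoplus_{x}\cE\bigr)$ with the $\Lambda$-induced form, quotienting and completing, letting $\rho$ be left multiplication and $h(x)$ the adjoint of $v\mapsto[1\otimes\delta_x v]$. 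Your reduction of the operator inequality to the vector form, the verification $\Lambda(x,y)(g)=h(x)\rho(g)h(y)^*$, and the key non-formal step, the contractivity estimate $\|\xi\|_0^2-\|\rho_0(\phi)\xi\|_0^2=\|\rho_0(\chi)\xi\|_0^2\ge 0$ with $\chi=(1-\overline{\phi}\phi)^{1/2}\in C_b(\Psi)$, are all sound; this estimate correctly exploits that $C_b(\Psi)$ is a commutative unital $C^*$-algebra, and it also gives $\rho_0(\phi)\cN\subseteq\cN$ so that $\rho$ descends. The points you flag as routine genuinely are: well-definedness of the sesquilinear form follows from its definition through a bilinear map on the algebraic tensor product (with the second slot taken in the conjugate space), positivity plus polarization over $\bC$ gives Hermitian symmetry, and boundedness of $h(x)$ is exactly your estimate $\|L_xv\|^2=\langle\Lambda(x,x)(1)v,v\rangle$. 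So the proof stands; it simply supplies details the paper delegates to the literature.
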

\noindent Let $\mathcal{K}_{\Psi}(\cE)$ be the collection of $\cB(\cE)$-valued positive kernels $k$ on $X$ such that  the function
$$(x, y)\mapsto\left(1-\psi(x)\overline{\psi(y)}\right)k(x,y)$$
is a $\cB(\cE)$-valued positive kernel, for each $\psi\in\Psi$. We say that $f: X \rightarrow \cB(\cE)$ is in $H^\infty_{\Psi}(\cE)$ if there is a non-negative constant $C$ such that the function
\begin{align}\label{HH}
	(x, y)\mapsto\left(C^2 k(x,y) - f(x)k(x,y) f(y)^* \right)
\end{align}
is a $\cB(\cE)$-valued positive kernel for each $k$ in $\mathcal{K}_{\Psi}(\cE)$. If $f$ is in $H^\infty_{\Psi}(\cE)$, then we denote by $C_f$ the smallest $C$ which satisfies \eqref{HH}. The collection of maps $f\in H^\infty_{\Psi}(\cE)$ for which
$C_f$ is no larger than $1$ is called the {\em $\Psi$-Schur-Agler class} and it is denoted by $\cS\cA_{\Psi}(\cE).$ We shall use the following realization formula for operator-valued $\Psi$-Schur-Agler class functions \cite{Tirtha-Ani-Vik, DM}.

\begin{thm}\label{Thm}
	Let $\cE$ be a Hilbert space and $f: X \to \cB(\cE)$ be a function. Then the following are equivalent:
	
	\begin{enumerate}
		\item $f \in \cS\cA_{\Psi}(\cE)$;
		
		\item there exists a completely positive kernel $\Lambda: X\times X \rightarrow \cB\left(C_b(\Psi), \cB(\cE) \right)$ such that
		$$
		I- f(x) f(y)^* = \Lambda (x, y)\left(1- E(x)\overline{E(y)}\right);
		$$
		\item there exist a Hilbert space $\mathcal{K}$, a unital $*$-representation $\rho: C_b(\Psi) \to \cB(\cK)$ and a unitary operator
		\begin{align*}
			U=\begin{bmatrix}
				A & B\\
				C & D
			\end{bmatrix}
			:\begin{bmatrix}\cE\\\mathcal{K}\end{bmatrix}\to \begin{bmatrix}\cE\\\mathcal{K}\end{bmatrix}
		\end{align*}
		such that
		\begin{align*}
			f(x)= A+B \rho(E(x))(I-D \rho(E(x)))^{-1}C \ \text{for all}\ x\in X.
		\end{align*}
	\end{enumerate}
\end{thm}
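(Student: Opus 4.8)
The plan is to prove the implications $(3)\Rightarrow(2)$, $(2)\Rightarrow(3)$, $(1)\Rightarrow(2)$ and $(3)\Rightarrow(1)$, which together give the equivalence of the three conditions; the two workhorses are the Kolmogorov decomposition (\cref{KD}) and a Hahn--Banach separation. Throughout I write $\delta(x):=\rho(E(x))\in\cB(\cK)$ and use that $\|\delta(x)\|\le\|E(x)\|_{C_b(\Psi)}=\sup_{\psi\in\Psi}|\psi(x)|<1$ by the first test-function axiom, so that $(I-D\delta(x))^{-1}$ always makes sense and the push-through identity $\delta(I-D\delta)^{-1}=(I-\delta D)^{-1}\delta$ applies freely.

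For $(3)\Rightarrow(2)$ I would run a ``lurking vector'' computation: put $w(x):=(I-\delta(x)^*D^*)^{-1}\delta(x)^*B^*$ and $p(x):=B^*+D^*w(x)$, so that $U^*\sbm{I\\ w(x)}=\sbm{f(x)^*\\ p(x)}$ with $w(x)=\delta(x)^*p(x)$. Equating the Gram matrices of the two sides (legitimate since $U$ is unitary) and using that $\rho$ is a unital $*$-representation of the commutative $C^*$-algebra $C_b(\Psi)$ gives
\[
I-f(x)f(y)^*=p(x)^*\bigl(I-\delta(x)\delta(y)^*\bigr)p(y)=p(x)^*\rho\bigl(1-E(x)\overline{E(y)}\bigr)p(y),
\]
so $\Lambda(x,y)(g):=p(x)^*\rho(g)p(y)$ is literally of Kolmogorov form $h(x)\rho(g)h(y)^*$ with $h(x)=p(x)^*$; by \cref{KD} it is a completely positive kernel satisfying the identity of $(2)$. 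For $(2)\Rightarrow(3)$ I would instead use \cref{KD} to write $\Lambda(x,y)(g)=h(x)\rho(g)h(y)^*$; then $(2)$ rearranges to $f(x)f(y)^*+h(x)h(y)^*=I+h(x)\delta(x)\delta(y)^*h(y)^*$, which is precisely the assertion that the assignment $\sbm{e\\ \delta(x)^*h(x)^*e}\mapsto\sbm{f(x)^*e\\ h(x)^*e}$ ($x\in X$, $e\in\cE$) is well defined and isometric, hence extends to an isometry $V$ on the closure of its domain in $\cE\oplus\cK$. I would then enlarge $V$ to a unitary on $\cE\oplus\cK$ (adjoining to $\cK$ a summand on which $\rho$ is prolonged to a unital $*$-representation, e.g.\ by evaluation at a fixed test function, should $V$ not already be onto), set $U:=V^*=\sbm{A&B\\ C&D}$, and read off from the block action of $V$, using the push-through identity and $\|D\delta(x)\|<1$, the transfer-function formula $f(x)=A+B\delta(x)(I-D\delta(x))^{-1}C$, which is $(3)$.

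The heart of the matter is $(1)\Rightarrow(2)$, which I would prove by cone separation. Fix a finite $F\subseteq X$; as $\Lambda$ ranges over completely positive kernels, the functions $(x,y)\mapsto\Lambda(x,y)(1-E(x)\overline{E(y)})$ on $F\times F$ form a convex cone $\cC_F$ in the space of Hermitian $\cB(\cE)$-valued kernels on $F$, and after normalizing $\Lambda$ and invoking \cref{KD} one checks $\cC_F$ is closed by a cluster-point argument on the Kolmogorov data $h$ and $\rho$. If $(x,y)\mapsto I-f(x)f(y)^*$ were \emph{not} in $\cC_F$, a separating functional would, once decoded, produce a kernel $k\in\cK_\Psi(\cE)$ on $F$ with $k-fkf^*\not\ge0$; here it is the \emph{second} test-function axiom --- that the restrictions $\psi|_F$ together with the constants generate all functions on $F$ --- that forces the decoded kernel to lie in $\cK_\Psi(\cE)$, and the resulting inequality contradicts $(1)$. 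Exhausting $X$ by finite subsets and passing to a cluster point of the corresponding completely positive kernels then yields a global $\Lambda$. Finally, for $(3)\Rightarrow(1)$ I would show $f$ is a contractive multiplier of the reproducing-kernel Hilbert space of each $k\in\cK_\Psi(\cE)$: rewrite $I-f(x)f(y)^*$ (known from the $(3)\Rightarrow(2)$ identity above) as an average, against the positive operator-valued measure $h(x)\,dP(\cdot)\,h(y)^*$ coming from the spectral resolution of $\rho$, of the elementary kernels $1-\psi(x)\overline{\psi(y)}$ with $\psi$ now ranging over the characters of $C_b(\Psi)$ --- all of which lie in the pointwise closure of $\Psi$, so that $(1-\psi(x)\overline{\psi(y)})k(x,y)\ge0$ persists --- and then multiply through by $k$; for scalar $\cE$ this last step is just an integrated Schur product of the positive kernels $(1-\psi(x)\overline{\psi(y)})k(x,y)$ and $h(x)\,dP\,h(y)^*$, while for general $\cE$ it needs an extra amplification. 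These four implications close up, giving the equivalence.

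I expect the Hahn--Banach step $(1)\Rightarrow(2)$ to be the main obstacle, for three reasons: (i) one must choose the right locally convex duality in the \emph{operator-valued} setting, since naive finite-matrix or trace duality does not suffice once $\cE$ is infinite-dimensional; (ii) proving that the cone $\cC_F$ is closed forces one to control the Kolmogorov representers under limits; and (iii) verifying that a separating functional really decodes to an element of $\cK_\Psi(\cE)$ is the single place where the generating property of the test functions is genuinely used. By comparison the remaining technical points --- enlarging $V$ to a unitary while keeping $\rho$ unital and $*$-preserving in $(2)\Rightarrow(3)$, and the amplification that carries the scalar Schur-product argument in $(3)\Rightarrow(1)$ over to general coefficient spaces --- are routine.
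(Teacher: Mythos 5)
The paper does not prove \cref{Thm}: it is imported verbatim from \cite{Tirtha-Ani-Vik,DM}, so there is no in-paper argument to compare against. Your overall plan --- Hahn--Banach cone separation over finite subsets for the hard direction, Kolmogorov decomposition (\cref{KD}) plus a lurking isometry for the two structural equivalences, and a Schur-product/spectral-measure argument for sufficiency --- is exactly the strategy of those references, and your computations for $(3)\Rightarrow(2)$ and $(2)\Rightarrow(3)$ are sound. In particular your $p(x)=B^*+D^*w(x)$ telescopes to $(I-D^*\delta(x)^*)^{-1}B^*$, recovering the usual $\eta(x)=B(I-\delta(x)D)^{-1}$, and the isometry $\sbm{e\\ \delta(x)^*h(x)^*e}\mapsto\sbm{f(x)^*e\\ h(x)^*e}$ does yield $U=V^*$ with the stated transfer function once $V$ is co-extended with $\rho$ to a unitary.

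The one place I would push back is your claim that the amplification in $(3)\Rightarrow(1)$ is ``routine''. What must be shown positive is $k(x,y)-f(x)k(x,y)f(y)^*$, and when $k$ takes genuinely operator values in $\cB(\cE)$ this is \emph{not} the Schur product $k(x,y)\bigl(I-f(x)f(y)^*\bigr)$, since $k(x,y)$ and $f(x)$ need not commute. Read literally, the paper's definition of $\cK_\Psi(\cE)$ (all $\cB(\cE)$-valued positive kernels passing the test-function Schur test) makes $(3)\Rightarrow(1)$ outright false: take $f\equiv A$ with $A$ unitary, realized by $U=\sbm{A&0\\0&I_\cK}$, and $k\equiv P$ a constant rank-one projection not commuting with $A$; then $k\in\cK_\Psi(\cE)$ because a constant positive kernel always passes the test, yet $k-fkf^*=P-APA^*\not\ge0$. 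The result is true with the reading that the test kernels are scalar --- i.e.\ $\cK_\Psi(\cE)$ is really $\{k\otimes I_\cE:k\in\cK_\Psi(\bC)\}$, and $(1)$ says $f$ is a contractive multiplier of each $\cH(k)\otimes\cE$ --- which is what \cite{DM} and \cite{BH} use; under that reading $k-fkf^*=k\cdot(I-ff^*)$ and your argument closes: it is the Schur product of the scalar positive kernel $(1-\chi(E(x))\overline{\chi(E(y))})\,k(x,y)$ with the $\cB(\cE)$-valued measure kernel $h(x)\,dP(\chi)\,h(y)^*$, integrated. You should state this reduction explicitly, because as written it is the single step where the proof can silently fail, and it is not an amplification of the scalar case so much as a restriction on what ``test kernel'' means.

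Two smaller notes. First, your proof of $(1)\Rightarrow(2)$ is only a sketch but matches the standard cone argument; the closedness of $\cC_F$ does require the normalization you allude to (the lower bound $1-E(x)\overline{E(x)}\ge\delta_x>0$ on a finite $F$ controls $\Lambda(x,x)(1)$ and gives the compactness needed for the cluster-point step). Second, when you enlarge the lurking isometry to a unitary in $(2)\Rightarrow(3)$, the defect spaces need not be equidimensional inside $\cE\oplus\cK$; the standard fix is to adjoin an infinite-dimensional $\cK_0$ and extend $\rho$ by a direct sum with any unital $*$-representation (a character times $I_{\cK_0}$ is fine, as you say), which is what makes both defects the same size and keeps the colligation in block form on $\cE\oplus\cK'$.
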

It is time to define the key protagonist of this paper.
\begin{definition}
A function $f: X \to \cB(\cE)$ is said to be $\Psi$-\textit{Herglotz-Agler} class function if 
$$ (x,y) \mapsto f(x)k(x, y) + k(x, y) f(y)^*$$
is a $\cB(\cE)$-valued positive kernel for each $k$ in $\mathcal{K}_{\Psi}(\cE)$. 
\end{definition}
\noindent The realization formula is extremely powerful and leads to numerous significant results. For instance, it enables the derivation of the Pick-Nevanlinna interpolation theorem \cite{AM-Book}, proves the commutant lifting theorem \cite{Ball}, and establishes the Carath\' eodory approximation result \cite{ABJK}. It also aids in the factorization of Schur-Aglar class functions \cite{BK, DJ}. Additionally, its applications extend to fields such as signal processing \cite{Gohberg} and electrical engineering \cite{Helton}. In the following section, we will use it to prove  \cref{Main}.

\section{Proof of Main Theorem}
Now, we are all set to prove the the realization of  operator-valued Herglotz functions on a set  $X$ i.e.,  \cref{Main}. The proof is mainly motivated from \cite{Ball-V}. Also, one can explore \cite{Ball-V2} as well as \cite{Ball-IEOT} for insights into the application of Krein space theory in the realization formula.

\noindent \textbf{Proof of Theorem \ref{Main}:}
We establish the equivalence of $\textbf{[H]}, \textbf{[D]}$ and  $\textbf{[R]}$ by showing that
$$ \textbf{[H]}\implies \textbf{[D]} \implies \textbf{[R]}\implies \textbf{[D]}\implies \textbf{[H]}.$$

\noindent $\text {Proof of } \textbf{[H]}\Rightarrow \textbf{[D]}:$	Let	$f$ be in  $\Psi$- Herglotz-Agler class function. Then $$T(x)= \left(I-f(x) \right) \left(I+f(x) \right)^{-1}$$ is in $\Psi$-Schur-Agler class. By Theorem \ref{Thm}, there exists a completely positive kernel $\Gamma: X\times X \rightarrow \cB\left(C_b(\Psi), \cB(\cE) \right)$ such that
$$
I-T(x)T(y)^*= \Gamma(x, y)\left(1- E(x)\overline{E(y)} \right).
$$
After putting the expression of $T$ we get
$$I- \left(I+f(x) \right)^{-1} \left(I-f(x) \right)  \left(I- f(y)^* \right) \left(I+f(y)^* \right)^{-1} =\Gamma(x, y)\left(1- E(x)\overline{E(y)}\right).$$
A simple calculations yeilds the following
\begin{align}\label{EqD}
	2\left(I+ f(x)\right)^{-1} [f(x)+ f(y)^*] \left(I+f(y)^*\right)^{-1}=\Gamma(x, y)\left(1- E(x) \overline{E(y)} \right).
\end{align}
Set $$\Lambda(x, y) (g) =  \left[\frac{I+ f(x)}{\sqrt{2}}\right]\Gamma(x, y)(g)\left[\frac{I+ f(y)^*}{\sqrt{2}}\right] $$ for $g\in C_b(\Psi)$. Then from equation \eqref{EqD}, we have
$$f(x)+ f(y)^* = \Lambda (x,y) \left(1-  E(x)\overline{E(y)} \right).$$ If we show that $\Lambda(x, y)$ is completely positive kernel, then we are done. This is readily observed by applying Kolmogorov decomposition (Theorem \ref{KD}) to the completely positive kernel $\Gamma$. Indeed, if $\Gamma(x,y)(g)= h(x)\rho(g)h(y)^*$ is a Kolmogorov decomposition of $\Gamma$ where $\rho$ is a unital $*$-representation of $C_b(\Psi)$, then from the definition of $\Lambda$ in terms of $\Gamma$, it is easy to read off that 
$$
\Lambda(x,y)(g)= H(x)\rho(g) H(y)^*
$$
where $H(x)= \frac{1}{\sqrt{2}}(I+f(x))h(x)$. Thus $\Lambda$ has Kolmogorov decomposition and hence it is a completely positive kernel.

\noindent$\text{Proof of}\ \textbf{[D]}\Rightarrow \textbf{[R]}:$ Suppose $f$ satisfies \eqref{EqnD2}, for some completely positive kernel $\Lambda$. Then by Theorem \ref{KD}, there exist a Hilbert space $\cK$ and a unital $*$-homomorphism $\rho : C_b(\Psi) \to \cB(\cK)$ and a function $h: X \to \cB(\cK, \cE)$ such that $\Lambda(x,y)(g)=  h(x)\rho(g)h(y)^*$ for $g\in C_b(\Psi)$. Then we have
\begin{align*}
	f(x)+f(y)^* =   h(x) \rho(1-E(x)\overline{E(y)} ) h(y)^*.
\end{align*}
This can be rewritten as
\begin{align}\label{lurking-eq}
	f(x)+f(y)^* +  h(x) \rho(E(x)) \rho(E(y))^*h(y)^* = h(x) h(y)^*.
\end{align}
Now, we shall employ the Krein-space theory, see \cite{Krein-Bog} for more details. Consider the Krein-space $\mathcal{E}\oplus\mathcal{K}\oplus\mathcal{E}\oplus\mathcal{K}$ with inner-product induced by the signature matrix  given by
\begin{align}\label{signature}
	J= \begin{bmatrix}
		0 & 0 & I_{\cE} & 0 \\
		0 & -I_{\cK}& 0 & 0 \\
		I_{\cE} & 0  & 0 & 0\\
		0 &  0 & 0 & I_{\cK}
	\end{bmatrix}.
\end{align}
Consider the subspace
\begin{align*}
	\cN= \overline{\operatorname{span}}\left\lbrace
	\begin{bmatrix}
		f(x)^* \\
		h(x)^*\\
		I_{\mathcal{E}} \\
		\rho(E(x))^* h(x)^*	
	\end{bmatrix}e: e\in\mathcal{E}\ \text{ and } x\in X
	\right\rbrace
\end{align*}
on the space $\mathcal{E}\oplus\mathcal{K}\oplus\mathcal{E}\oplus\mathcal{K}$. Let $x, y \in X$ and $e, e'\in\mathcal{E}$. Then we have
\begin{align*}
	&\left\langle J 	\begin{bmatrix}
		f(x)^* \\
		h(x)^*\\
		I_{\mathcal{E}} \\
		\rho(E(x))^* h(x)^*	
	\end{bmatrix}e, 	\begin{bmatrix}
		f(y)^* \\
		h(y)^* \\
		I_{\mathcal{E}}\\
		\rho(E(y))^*h(y)^*
	\end{bmatrix}e'\right\rangle\\
	&=\left\langle \begin{bmatrix}
		I_{\mathcal{E}} \\ -h(x)^* \\  f(x)^* \\   \rho(E(x))^*h(x)^* \end{bmatrix}e,  \begin{bmatrix}
		f(y)^* \\
		h(y)^*\\
		I_{\mathcal{E}}\\
		\rho(E(y))^*h(y)^*
	\end{bmatrix}e' \right\rangle\\
	& = \left\langle
	e, [f(x)+f(y)^* + h(x) \rho(E(x))\rho(E(y))^*h(y)^*- h(x)h(y)^*] e'
	\right\rangle\\
	& = 0 \quad \quad \quad \quad  \text{ (by equation \eqref{lurking-eq})}.
\end{align*}
Therefore, $\cN$ is a $J$-isotropic subspace on $\mathcal{E}\oplus\mathcal{K}\oplus\mathcal{E}\oplus\mathcal{K}$. Let us proceed with embedding $\cN$ into a $J'$-Lagrangian subspace, denoted as $\cN'$, by considering an ambient space $\mathcal{E}\oplus\mathcal{K}'\oplus\mathcal{E}\oplus\mathcal{K}'$, where $\mathcal{K}\subset \mathcal{K}'$. Notably, the Krein-space Gramian matrix $J'$ in this extended space retains the same block form as $J$ defined in equation \eqref{signature}. Let us demonstrate that $\cN'$ represents a graph of a closed operator with a domain in $\mathcal{E}\oplus\mathcal{K}'$. To establish this, we rely on the following lemma which was communicated to us J. A. Ball. 
\begin{lemma}\label{Graph}
	Let $\mathcal{K}_1$ and $\mathcal{K}_2$ be two Hilbert spaces. Then a closed subspace $\cN \subset \mathcal{K}_1 \times \mathcal{K}_2$ has the form of a graph space, i.e., there exists a closed linear operator (possibly unbounded) $\Delta: \cD \rightarrow \mathcal{K}_1$ defined on some domain $\mathcal{D} \subset \mathcal{K}_2$ so that $X$ has the form
	$$ \cN = \begin{bmatrix}
		\Delta \\
		I_{\mathcal{K}_2}
	\end{bmatrix} \cD$$
	for some linear subspace $\cD$ contained in $\mathcal{K}_2$ if and only if
	$$ X \cap \begin{bmatrix}
		\mathcal{K}_1 \\
		\{0\}
	\end{bmatrix} = \{0\}.$$	
\end{lemma}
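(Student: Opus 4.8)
The plan is to prove both implications directly from the definition of a graph subspace, the crucial point being how one recovers the operator $\Delta$ from $\cN$ using the intersection hypothesis. For the easy direction $(\Rightarrow)$, suppose $\cN = \begin{bmatrix}\Delta\\ I_{\mathcal{K}_2}\end{bmatrix}\cD$. Any vector of $\cN$ lying in $\mathcal{K}_1\times\{0\}$ has the form $(\Delta d, d)$ with $d=0$, hence equals $(0,0)$; so the intersection is trivial, and this needs nothing beyond $\Delta$ being single-valued.

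For the converse, I would first define the candidate domain $\cD$ to be the image of $\cN$ under the coordinate projection $P_2:\mathcal{K}_1\times\mathcal{K}_2\to\mathcal{K}_2$, that is, $\cD=\{d\in\mathcal{K}_2: (k,d)\in\cN \text{ for some } k\in\mathcal{K}_1\}$, and then show that the assignment $d\mapsto k$ is well defined: if $(k,d)$ and $(k',d)$ both lie in $\cN$, then since $\cN$ is a linear subspace, $(k-k',0)\in\cN\cap(\mathcal{K}_1\times\{0\})=\{0\}$, forcing $k=k'$. Calling this map $\Delta$, linearity of $\Delta$ on $\cD$ is immediate from linearity of $\cN$, and by construction $\cN=\{(\Delta d, d): d\in\cD\}=\begin{bmatrix}\Delta\\ I_{\mathcal{K}_2}\end{bmatrix}\cD$. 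It then remains to check that $\Delta$ is a closed operator, and here I would invoke closedness of $\cN$: if $(d_n)\subset\cD$ with $d_n\to d$ in $\mathcal{K}_2$ and $\Delta d_n\to k$ in $\mathcal{K}_1$, then $(\Delta d_n,d_n)\to(k,d)$ in $\mathcal{K}_1\times\mathcal{K}_2$, so $(k,d)\in\cN$, whence $d\in\cD$ and $\Delta d=k$; this is precisely the statement that $\Delta$ is closed.

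The argument is elementary and I do not anticipate a genuine obstacle: the intersection hypothesis $\cN\cap(\mathcal{K}_1\times\{0\})=\{0\}$ is exactly what makes the relation $\cN$ single-valued over $\cD$, and closedness of the subspace $\cN$ transfers verbatim to closedness of the operator $\Delta$. The only mild point of care is to note that $\cD$ need not be closed in $\mathcal{K}_2$ (which is consistent with $\Delta$ being, in general, only densely defined and unbounded), so one should assert only that $\cD$ is a linear subspace and not attempt to treat it as a Hilbert space in its own right.
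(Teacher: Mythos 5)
Your proof is correct, and it is the standard elementary argument: the intersection condition is exactly single-valuedness of the linear relation $\cN$, and closedness of the subspace transfers directly to closedness of the operator. Note, however, that the paper itself does not prove this lemma; it simply states it, credits it to a private communication from J.~A.~Ball, and points to \cite{Ball-Helton-JOT} and \cite{Ball} for the surrounding theory of graph spaces. So there is no ``paper's own proof'' to compare against --- your argument supplies a proof the authors chose to omit, and it is the natural one. The one small cosmetic issue is that the lemma statement as printed uses $X$ in two places where it clearly means $\cN$ (a typo in the paper); your proof implicitly and correctly reads it as $\cN$ throughout.
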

We would like to note that a variant of the Lemma \ref{Graph} was the main tool for the development of the Grassmannian approach to Nevanlinna-Pick interpolation developed by Ball and Helton \cite{Ball-Helton-JOT}. Also, see the more recent survey \cite{Ball}. 

\noindent \textbf{Continuation of the proof of} $\textbf{[D]}\Rightarrow \textbf{[R]}:$
To use the Lemma \ref{Graph}, we shall prove that $$\cN'\cap \left(\cE \oplus \cK \oplus \{0\} \oplus \{0\} \right) = \{0\}.$$
To prove this, let $\begin{psmallmatrix}
	e\\ m'\\ 0\\0	
\end{psmallmatrix}$ be a point in $\cN'\cap  \left(\cE \oplus \cK' \oplus \{0\} \oplus \{0\} \right) .$ Then by definition of the set $\cN'$ we have, for all $x\in X$ and $e'\in\mathcal{E}$,
\begin{align*}
	\left\langle J' \begin{bmatrix}
		e\\
		m'\\
		0 \\
		0
	\end{bmatrix}, \begin{bmatrix}
		f(x)^* \\
		h(x)^* \\
		I_{\mathcal{E}}\\
		\rho(E(x))^* h(x)^*
	\end{bmatrix}e' \right\rangle=0.
\end{align*}
A simple calculation yields the following
\begin{align*}
	\langle -h(x)P_{\cK}m'+e, e'\rangle_{\cE}=0
\end{align*}
for all $e'\in\cE$. This implies that $e=h(x) P_{\cK}m'$ for all $x\in X$. In particular, we have $e=h(x_0) P_{\cK}m'$, where $x_0$ is a point in $X$. We also have
\begin{align*}
	0=\left\langle J' \begin{bmatrix}
		e \\
		m'\\
		0 \\
		0
	\end{bmatrix}, \begin{bmatrix}
		e \\
		m'\\
		0 \\
		0
	\end{bmatrix} \right\rangle=\left\langle J' \begin{bmatrix}
		h(x_0) P_{\cK}m' \\
		m'\\
		0 \\
		0
	\end{bmatrix}, \begin{bmatrix}
		h(x_0) P_{\cK}m'\\
		m' \\
		0 \\
		0
	\end{bmatrix} \right\rangle=
	-\|m'\|^2.
\end{align*}
Thus, $m'=0$ and hence $e=0$. This proves that $\cN'\cap\left(\cE \oplus \cK' \oplus \{0\} \oplus \{0\} \right)= \{0\}.$
Therefore by Lemma \ref{Graph}, $\cN'$ is a graph space, i.e., there exists a closed operator $T$ with domain $\mathcal{D} \subset \cE \oplus \mathcal{K}'$ such that
$$
\cN= \left\lbrace  \begin{bmatrix}
	Ty\\
	y
\end{bmatrix} : y\in \mathcal{D} \right\rbrace.
$$
Since for each $x\in X$ and $e\in \cE$, $\begin{psmallmatrix}
	f(x)^*e\\
	h(x)^* e\\
	e\\
	\rho(E(x))^* h(x)^*e
\end{psmallmatrix} \in \cN \subset \cN'$ and $\cN'$ is a graph of $T$, 
$$\begin{pmatrix}
	e\\
	\rho(E(x))^*h(x)^*e	
\end{pmatrix} \in \mathcal{D}.$$
In particular, for $x=x_0$, $E(x_0)$ is the zero function on $\Psi$  and hence $\rho(E(x_0))=0$. So, we have $\begin{psmallmatrix}
	e\\
	0
\end{psmallmatrix} \in \mathcal{D}$. Therefore we can view the subspace $\mathcal{D}$ as $\mathcal{E} \oplus \mathcal{K}''$ for some subspace $\mathcal{K}''$ of $\cK'$. Invoking a variant (considering the signature matrix $J'$) of Lemma 3.4 in \cite{Ball-IEOT}, we get that $\mathcal{K}'' =\cK'$ and the operator $T$ is bounded and it will be of the form $\begin{psmallmatrix}
	A & B\\
	C & D
\end{psmallmatrix}$ on $\mathcal{E} \oplus \cK'$. Also the proof of the same lemma shows that the operators $A, B, C$ and $D$ satisfy the relations in [\textbf{R}].

Consider $x\in X$ and $e\in \cE$. Now $\cN'$ being the graph space of the bounded operator $T$, there exists $$\begin{psmallmatrix}
	e'\\
	m'
\end{psmallmatrix} \in \cE \oplus \cK'$$ such that
\begin{align*} 
	\begin{bmatrix}
		f(x)^*e\\
		h(x)^* e\\
		e\\
		\rho(E(x))^*h(x)^*e
	\end{bmatrix} = \begin{bmatrix}
		A & B \\
		C & D \\
		I_{\cE} & 0 \\
		0  & I_{\cK}
	\end{bmatrix} \begin{bmatrix}
		e'\\
		m'
	\end{bmatrix}.
\end{align*}
This gives,
\begin{align*}
	&Ae' +Bm' = f(x)^*e, \quad 
	Ce' + Dm'= h(x)^*e ,\quad  
	e=e' \quad \text{and} \quad m'= \rho(E(x))^*h(x)^*e.
\end{align*}
Therefore we have \begin{align*}
	&A+B \rho(E(x))^*h(x)^* = f(x)^* \quad \text{and} \quad
	C+ D \rho(E(x))^* h(x)^*  =h(x)^*.
\end{align*}
Note that, $$\|E(x)\|_\infty = \sup_{\psi \in \Psi} |\psi(x)|$$ is strictly smaller than $1$, by our assumption. Also, $D$ being unitary $D\rho(E(x))^*$ is a strict contraction and hence $(I-D\rho(E(x))^*) $ is invertible. Solving the above operator equations we get
$$
f(x)^*= A+B\rho(E(x))^*(\left(I-D\rho(E(x))^*\right)^{-1} C.
$$
Therefore, 
\begin{align*}
f(x) & = A^* + C^*\left( I- \rho(E(x) D^*\right)^{-1} \rho(E(x)) B^* \\
     &= A_1 + B_1 \rho(E(x)) \left( I- D_1  \rho(E(x) \right)^{-1} C_1
\end{align*}
 where $A_1 = A^*$, $B_1= C^*$, $D_1= D^*$ and $C_1= B^*$. It is also easy to check that $U_1 = \left(\begin{smallmatrix}  A_1 & B_1 \\ C_1 & D_1 \end{smallmatrix} \right)$ is unitary and the operators $A_1, B_1, C_1$ and $D_1$ satisfy the relations in [\textbf{R}].

\noindent$\text {Proof of } \textbf{[R]}\Rightarrow \textbf{[D]}:$ Suppose $f$ is of the form \eqref{realization-general}. Then using the relations satisfied by the operators $A, B, C$ and $D$ we have
\begin{align*}
	&f(x)+f(y)^*\\
	&= A + B \pi (E(x))\left(I- D \pi (E(x)) \right)^{-1} C + A^*+ C^*\left(I-\pi(E(y))^*D^* \right)^{-1} \pi(E(y))^*B^* \\
	&=BB^* + B \left(I-\pi(E(x))D \right)^{-1} \pi(E(x)) DB^* + B D^*\pi(E(y))^* \left(I-D^*\pi(E(y))^* \right)^{-1} B^* \\
	&= \eta(x) \left[\left(I- \pi (E(x)) D \right)\left(I- D^* \pi (E(y))^* \right) \right. \\
	& \left. +   \pi(E(x))D \left(I-D^* \pi(E(y))^* \right) +   \left(I-  \pi(E(x)) D \right) D^* \pi (E(y))^*  \right] \eta(y)^* 
	\\
	&=\eta(x) \pi \left( 1-E(x) \overline{E(y)} \right) \eta(y)^*,
\end{align*}
where $\eta(x)=B \left(I- \pi (E(x))D \right)^{-1}$. Define, $\Lambda: X \times X \rightarrow \cB\left(C_b(\Psi), \cB(\cE) \right)$ by
$$
\Lambda(x,y)(g)=  \eta(x) \pi (g) \eta(y)^*.
$$
Then by Theorem \ref{KD}, $\Lambda$ is a completely positive kernel on $X$. Also,
$$
f(x)+f(y)^*= \Lambda(x,y)(1-E(x)\overline{E(y)}).
$$

\noindent$\text {Proof of } \textbf{[D]}\Rightarrow \textbf{[H]}:$ Assume that $f$ satisfies \eqref{EqnD2}. Let $$F(x)= \left(I-f(x) \right) \left(I+f(x) \right)^{-1}.$$ Then a simple computation shows that,
\begin{align*}
	I- F(x)F(y)^* &=2  \left(I+f(x) \right)^{-1} \left[f(x)+ f(y)^* \right]  \left(I + f(y)^* \right)^{-1} \\
	&=2  \left(I+f(x) \right)^{-1} \Lambda (x,y) \left( 1-\overline{E(y)}E(x)\right) \left(I + f(y)^* \right)^{-1} \\
	&=\Xi(x,y)\left( 1-\overline{E(y)}E(x)\right),
\end{align*}
where $\Xi: X \times X \rightarrow \cB \left(C_b(\Psi), \cB(\cE) \right)$ is given by $$ \Xi(x,y)(g) = 2  \left(I+f(x) \right)^{-1}\Lambda (x,y) (g) \left(I + f(y)^* \right)^{-1} \ \text{for}\ g \in C_b(\Psi).$$
Therefore by Theorem 1 in \cite{Tirtha-Ani-Vik}, we can conclude that $g$ is in $\Psi$-Schur-Agler class provided $\Xi$ is a completely positively kernel. Indeed, a similar argument used at the end of the proof of the implication \textbf{[H]} $\Rightarrow$ \textbf{[D]}, proves that $\Xi$ is a completely positive kernel.

As an application of  \cref{Main}, we have the following extension result. This gives another illustration of the power of realizations.
\begin{corollary}
	Let $Y$ be a subset of $X$ and let $f: E\rightarrow \mathcal{B}(\mathcal{E})$ such that $f$ is in the $\Psi$- Herglotz-Agler class. Then the following are equivalent:
	\begin{itemize}
		
		\item[(1)]
		there exists a function  $F: X\rightarrow \mathcal{B}(\mathcal{E})$ with $ F$ is in the $\Psi$-Herglotz-Agler class  such that
		$$F|_{Y}= f ;$$
		\item [(2)]
		there exists a completely positive kernel $\Lambda: X\times X\rightarrow \mathcal{B}\left( C_b(\Psi), \mathcal{B}(\cE)\right)$ such that
		\begin{align}\label{Eq:Ext}
			f(x)+ f(y)^* = \Lambda (x,y) \left(1-  E(x) \overline{E(y)}\right)\ \text{ for } x, y \in Y.
		\end{align}
	\end{itemize}
\end{corollary}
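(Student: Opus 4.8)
The plan is to prove the corollary as a direct consequence of \cref{Main}, exploiting the fact that condition \textbf{[D]} is \emph{intrinsic to the values of $f$ on $Y$} and does not see the rest of $X$. The implication $(1)\Rightarrow(2)$ is the easy direction: if $F$ is a $\Psi$-Herglotz-Agler class function on $X$ with $F|_Y=f$, then by the equivalence $\textbf{[H]}\Leftrightarrow\textbf{[D]}$ in \cref{Main} applied on $X$, there is a completely positive kernel $\Lambda:X\times X\to\cB(C_b(\Psi),\cB(\cE))$ with $F(x)+F(y)^*=\Lambda(x,y)(1-E(x)\overline{E(y)})$ for all $x,y\in X$; restricting this identity to $x,y\in Y$ and using $F|_Y=f$ gives exactly \eqref{Eq:Ext}. (Strictly, one should note $x_0\in Y$ or, if not, simply keep $\Lambda$ defined on all of $X\times X$ — the statement of (2) allows $\Lambda$ on $X\times X$, so nothing needs to be shrunk.)

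For the substantive direction $(2)\Rightarrow(1)$, I would first observe that a completely positive kernel $\Lambda$ satisfying \eqref{Eq:Ext} only for $x,y\in Y$ is in particular a completely positive kernel on the smaller set $Y$, so by the equivalence $\textbf{[D]}\Rightarrow\textbf{[R]}$ of \cref{Main} (applied with $X$ replaced by $Y$, using that $x_0\in X$ — here one needs $x_0\in Y$, so I would either assume this or handle it as discussed below), there exist a Hilbert space $\mathcal{K}$, a unital $*$-representation $\pi:C_b(\Psi)\to\cB(\mathcal{K})$, and a colligation matrix $U=\sbm{A & B\\ C & D}$ on $\cE\oplus\mathcal{K}$ satisfying the relations $D^*D=DD^*=I$, $D^*C=B^*$, $C^*C=BB^*=A+A^*$, such that $f(y)=A+B\pi(E(y))(I-D\pi(E(y)))^{-1}C$ for all $y\in Y$. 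The key point is that the realization formula on the right-hand side makes sense for \emph{every} $x\in X$, not just $x\in Y$: indeed $E:X\to C_b(\Psi)$ is defined on all of $X$, $\|E(x)\|_\infty=\sup_{\psi\in\Psi}|\psi(x)|<1$ for all $x\in X$ by the first test-function axiom, and $D$ unitary forces $D\pi(E(x))$ to be a strict contraction, so $(I-D\pi(E(x)))^{-1}$ exists for every $x\in X$. Thus I define
$$ F(x) = A + B\pi(E(x))\left(I-D\pi(E(x))\right)^{-1}C, \qquad x\in X. $$
By construction $F|_Y=f$, and since $F$ is given by a valid realization of the form \textbf{[R]} on all of $X$, the implication $\textbf{[R]}\Rightarrow\textbf{[D]}\Rightarrow\textbf{[H]}$ of \cref{Main} (now applied on the full set $X$) shows $F$ is a $\Psi$-Herglotz-Agler class function, completing $(2)\Rightarrow(1)$.

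The main obstacle — really the only subtlety — is the base-point hypothesis: \cref{Main} requires a point $x_0$ with $\psi(x_0)=0$ for all $\psi\in\Psi$, and to run the $\textbf{[D]}\Rightarrow\textbf{[R]}$ step on the set $Y$ one needs $x_0\in Y$. If $x_0\notin Y$, the cleanest fix is to replace $Y$ by $Y\cup\{x_0\}$: condition \eqref{Eq:Ext} on $Y$ together with the fact that $E(x_0)$ is the zero function lets one extend $\Lambda$'s defining identity to include $x_0$ by setting $f(x_0)+f(x_0)^* = \Lambda(x_0,x_0)(1) = h(x_0)h(x_0)^*$ after a Kolmogorov decomposition, i.e. one first applies \cref{KD} to $\Lambda$ on $Y$, reads off $h$ and $\rho$, and notes the realization construction never actually used a prescribed value of $f$ at $x_0$ — it only used $E(x_0)=0$ to locate the block structure of the colligation. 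So the whole argument goes through verbatim with $Y$ replaced by $Y\cup\{x_0\}$ in the intermediate step, and then $F$ restricted to the original $Y$ still equals $f$. I would present the proof assuming this reduction is either trivial ($x_0\in Y$) or handled by this one-line enlargement, and otherwise the proof is a pure bookkeeping exercise in invoking \cref{Main} twice — once on $Y$ (or $Y\cup\{x_0\}$) to get the colligation, once on $X$ to certify membership in the class.
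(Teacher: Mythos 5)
Your argument is essentially the paper's: for $(2)\Rightarrow(1)$ the paper passes $\textbf{[D]}\Rightarrow\textbf{[H]}\Rightarrow\textbf{[R]}$ on $Y$, extends the realization to all of $X$, and closes with $\textbf{[R]}\Rightarrow\textbf{[H]}$; you go $\textbf{[D]}\Rightarrow\textbf{[R]}$ on $Y$ directly and then $\textbf{[R]}\Rightarrow\textbf{[D]}\Rightarrow\textbf{[H]}$ on $X$, which is the same content with one detour removed. The idea that carries both proofs --- that $A+B\pi(E(x))(I-D\pi(E(x)))^{-1}C$ makes sense for \emph{every} $x\in X$ because $\|E(x)\|_\infty<1$ and $D$ is unitary --- is identical, and your handling of $(1)\Rightarrow(2)$ by restriction matches the paper's ``obvious'' direction.

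Where you go beyond the paper is in flagging the base-point hypothesis, and here you have correctly identified a subtlety the paper's proof silently elides: to invoke \cref{Main} with $X$ replaced by $Y$, one needs $x_0\in Y$, since the proof of $\textbf{[D]}\Rightarrow\textbf{[R]}$ uses $\rho(E(x_0))=0$ indispensably to force $\cE\oplus\{0\}$ into the graph domain $\mathcal{D}$ and thus pin down the block structure of the colligation. However, your proposed repair --- replacing $Y$ by $Y\cup\{x_0\}$ --- is not yet the one-line fix you suggest. To run the argument on $Y\cup\{x_0\}$ you must assign a value $f(x_0)$ so that \eqref{Eq:Ext} continues to hold there, in particular $f(x_0)+f(y)^*=\Lambda(x_0,y)(1)$ for every $y\in Y$; after the Kolmogorov decomposition $\Lambda(x,y)(g)=h(x)\rho(g)h(y)^*$ this forces $h(x_0)h(y)^*-f(y)^*$ to be independent of $y\in Y$, and that does not obviously follow from \eqref{Eq:Ext} alone. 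So either the corollary should carry the standing hypothesis $x_0\in Y$ (which is evidently how the authors intend it to be read, given their proof makes no mention of the issue), or a genuine further argument is needed before the enlargement step closes. Everything else in your proposal is sound.
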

\begin{proof}
The implication $(1) \Rightarrow (2)$ is obvious. To establish the reverse implication, we make use of \cref{Main}.
 Using $\textbf{[H]}\Rightarrow \textbf{[D]}$ of  \cref{Main} to $f$ satisfying \eqref{Eq:Ext}, one observe that $f$ is $\Psi$-Herglotz-Agler class function on $Y$. It remains to extend $f$ to $F$ on $X$ which still satisfies $\textbf{[H]}$. To do this, we apply $\textbf{[H]}\Rightarrow \textbf{[R]}$ part of \cref{Main} to $f$ to get a $\Psi$-realization formula for $f$. Now we use this realization formula to get the extension $F$ on all of $X$. Finally, $\textbf{[R]}\Rightarrow \textbf{[H]}$ shows that $F$ is $\Psi$-Herglotz-Agler class function as required.
\end{proof}
\begin{remark}
	In general, we typically work with holomorphic test functions. In \cite{Tirtha-Ani-Vik}, it is demonstrated that holomorphic test functions inherently satisfy the condition $\psi(x_0)=0$ as stated in  \cref{Main}. Therefore,  our assumption on test functions that for all $\psi\in \Psi$, $\psi(x_0)=0$ for some  point $x_0\in X$, is not too rigid.
\end{remark}
\noindent The next remark can be viewed as a generalization of the classical Herglotz realization alluded in \eqref{RF} within this abstract framework.  
\begin{remark}\label{MRem}
It is worth mentioning that by decomposing $f(x_0) = A $ into its self-adjoint and skew-adjoint components with the help of item $ [\bold{R}] $ of \cref{Main}, and using the associated relations among $ A, B, C$, and $ D $, we can prove that a function $f$ belongs to the $ \Psi $-Herglotz-Agler class if and only if  
\begin{align}\label{MRea}
	f(x) = i \operatorname{Im} f(x_0) + V^* \left( I + U \pi(E(x)) \right) \left( I - U \pi(E(x)) \right)^{-1} V
\end{align}
for some unitary $ U $ and a bounded operator $ V $ satisfying $ V^* V = \operatorname{Re} f(x_0) $.
\end{remark}

\section{Some Concrete cases}

There are several known cases where the collection of test functions has been identified.  In the following, we will explore these instances in greater detail. 

\noindent
\textbf{\em The polydisc:} In this case the collection of test functions is just the $d-$many functions, namely the co-ordinate functions. 

\noindent
\textbf{\em Multi-connected domains:} The test functions on multi-connected domains are known \cite{DM}, and the number of such functions is uncountable. Moreover, the test functions in these domains are certain inner functions. For more details on inner functions, please refer to \cite{Fisher}. 

\noindent One prime example of a multi-connected domain is the annulus. Fix $q \in (0, 1) $ and define the annulus
$$
\mathbb{A}_q = \{z \in \mathbb{C} : q < |z| < 1\}.
$$
For normalization, fix a base point $c \in \mathbb{A}_q $ such that $ |c| \neq \sqrt{q} $. Given $\alpha \in \mathbb{T} $, there exists a holomorphic map $ \theta_\alpha $ on $\mathbb{A}_q $ that is unimodular on $ \partial \mathbb{A}_q $, has zeros at $ c $ and $ \frac{q}{\alpha c} $, and satisfies $\theta_\alpha(1) = 1 $. The set 
$$
\left\{\theta_\alpha : \alpha \in \mathbb{T}\right\}
$$
is a collection of test functions for $ H^\infty(\mathbb{A}_q) $, which is compact in the norm topology of $ H^\infty(\mathbb{A}_q) $. For more details, we refer the reader to \cite{DM}.

\noindent
\textbf{\em Quotient domains related to the bidisc:} For $d = 2$, consider the subgroup $G(m, t, 2)$, parametrized by positive integers $m$ and $t$, with $t$ dividing $m$, of the automorphism group $\operatorname{Aut}(\mathbb{D}^2)$ of $\mathbb{D}^2$:
$$
\boldsymbol{z} = (z_1, z_2) \mapsto \left( \alpha^{\nu_1} z_{\gamma(1)}, \alpha^{\nu_2} z_{\gamma(2)} \right),
$$
where $\alpha = e^{\frac{2 \pi i}{m}}$, $\gamma \in S_2$ (the permutation group of 2 symbols), and $\nu_1, \nu_2$ are integers whose sum is divisible by $t$. The group $G(m, t, 2)$ is a pseudo-reflection group of order $\frac{m^2 \cdot 2!}{t}$, and it appears in the classification of pseudo-reflection groups. We consider the quotient domain $\mathbb{D}^2 / G(m, t, 2)$, which is biholomorphic to the image $\theta(\mathbb{D}^2)$ of $\mathbb{D}^2$ under the proper holomorphic map $\theta = (\theta_1, \theta_2): \mathbb{D}^2 \to \theta(\mathbb{D}^2)$ given by
\[
\theta_1(\boldsymbol{z}) = E_1(z_1^m, z_2^m) \text{ and } \theta_2(\boldsymbol{z}) = \left[ E_2(z_1^m, z_2^m) \right]^{\frac{1}{t}} = (z_1 z_2)^{\frac{m}{t}},
\]
where $E_1$ and $E_2$ are the elementary symmetric polynomials in two variables; see \cite{Rudin-IUMJ}. Generally, these quotient domains are non-convex, not star-like, and do not have a $C^2$-smooth boundary. One of the primary examples of a quotient domain is the symmetrized bidisc, $\bG := \mathbb{D}^2 / G(1, 1, 2)$. The symmetrized bidisc has been the focus of substantial developments in both function theory and operator theory.

\noindent
The set of test functions on $\mathbb{G}$ is known and represented by 
$
\{\varphi_\alpha: \alpha \in \mathbb{T}\}$ 
 where  
$$\varphi_\alpha(s, p) = \frac{2\alpha p - s}{2 - \alpha s} \quad \text { for } (s, p) \in \mathbb{G}.$$
These functions appear as solutions to the two-point Nevanlinna-Pick interpolation problem, as discussed in \cite{KZ}; see also \cite{DKS, DKS2}. Recently, it was shown in \cite{BK2} that the test functions for
$\theta(\mathbb{D}^2)$ 
are given by $$\psi_\alpha(p_1, p_2) = \frac{2\alpha p_2^t - p_1}{2 - \alpha p_1} \quad (p_1, p_2) \in \theta(\mathbb{D}^2). $$

\noindent
\textbf{\em Distinguished varieties:}  Distinguished varieties are certain algebraic variety that plays a key role in function theory and operator theory, particularly in the study of the Pick-Nevanlinna interpolation problems and improvement of Andô's inequality. A distinguished variety lies in the polydisc $\mathbb{D}^d$ and intersects the boundary of the polydisc \(\partial(\mathbb{D}^d)\) only in a subset of the d-torus $\mathbb{T}^d = \{(z_1, \cdots, z_d) \in \mathbb{C}^d : |z_1| = 1, \cdots, |z_d| = 1\}$.  An example of a distinguished variety is the Neil parabola:
$$\cN=\{(z_1, z_2): z_1^2=z_2^3\}.$$
For further details on distinguished varieties, see \cite{AM, BKS}. 
Test functions are also known for certain distinguished varieties (see \cite{DM, DU}).

\noindent In these examples, the test functions are holomorphic, with a point in the domain where all test functions vanish. The classical motivation for the results in this paper stems from the polydisc. As for the symmetrized bidisc and the annulus, we will provide further elaboration below. Having several realization formulae for holomorphic functions on the symmetrized bidisc \cite{SYM_Real, Tirtha-Hari-JFA} and the annulus \cite{DM}, with values in the norm unit ball of $\mathcal{B}(\mathcal{E}$), leads to additional equivalent statements. We have the following realization formula for Herglotz class functions in these domains.
\begin{thm}
	Let $\Omega=\mathbb{G}\text{ or } \mathbb{A}_q$ and $f:\Omega\rightarrow \mathcal{B}(\mathcal{E})$ be a function. Let $\{\psi_{\alpha}: \alpha\in \bT\}$ denote the collection of test functions for $\Omega$ as defined above. We write, $\psi_\alpha(w) = \psi(\alpha, w)$ for $w\in \Omega$ and $\alpha \in \bT$. Then the following are equivalent:
	
	\begin{itemize}
		\item[\textbf{[H]}]
		$f$ is in $\mathfrak H(\Omega, \mathcal B (\mathcal E))$;
		
		\item [\textbf{[D1]}]
		there exist a Hilbert space $\mathcal{K}$, a holomorphic function $h:\Omega\rightarrow\mathcal{B}(\mathcal{K}, \mathcal{E})$ and a unitary operator $\tau$ on $\mathcal{K}$ such that
		
		\begin{align}\label{D1}
			f(z)+ f(w)^* = h (z)\left(I_{\mathcal{K}}- \psi(\tau, z)\psi(\tau, w)^*\right) h(w)^*;
		\end{align}
		
		\item [\textbf{[D2]}]
		there exists a completely positive kernel $\Lambda: \Omega\times \Omega\rightarrow \mathcal{B}\left( C(\mathbb{T}), \mathcal{B}(\cE)\right)$ such that
		\begin{align}\label{D2}
			f(z)+ f(w)^* = \Lambda\left(z, w\right)\left(1-  \psi(\cdot, z)\overline{\psi(\cdot, w)}\right) ;
		\end{align}

		\item [\textbf{[R1]}]
		there exist a Hilbert space $\mathcal{K}$, a unitary operator $\tau$ on $\mathcal{K}$ and a bounded colligation operator matrix
		\begin{align*}
			U= \begin{bmatrix}
				A & B \\
				C & D
			\end{bmatrix}: \mathcal{E}\oplus\mathcal{K}\rightarrow \mathcal{E}\oplus\mathcal{K}
		\end{align*} such that
		\begin{align*}\label{EqR1}
			f(z) = A + B\psi ( \tau, z) \left(I- D\psi (\tau, z)\right)^{-1}C
		\end{align*}
		where $D^*D= DD^*= I_\mathcal{K}$, $D^*C= B^*$ and $C^*C=BB^*= A+A^*$ and
		
		\item [\textbf{[R2]}]
		there exist a Hilbert space $\mathcal{K}$, a unital $*$-representation $\rho: C(\mathbb{T})\rightarrow\mathcal{B}(\mathcal{K})$ and a bounded colligation operator matrix
		$$U= \begin{bmatrix}
			A & B \\
			C & D
		\end{bmatrix}: \mathcal{E}\oplus\mathcal{K}\rightarrow \mathcal{E}\oplus\mathcal{K}$$
		such that 	\begin{align*}
			f(z) = A + B\rho\left(\psi(\cdot, z)\right) \left(I- D\rho\left(\psi (\cdot , z)\right)\right)^{-1}C
		\end{align*}
		where $D^*D= DD^*= I_\mathcal{K}$, $D^*C= B^*$ and $C^*C=BB^*= A+A^*$.
	\end{itemize}
	
\end{thm}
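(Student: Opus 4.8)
\textbf{Proof plan for the concrete realization theorem on $\mathbb{G}$ and $\mathbb{A}_q$.}
The strategy is to observe that $\Omega = \mathbb{G}$ (resp. $\mathbb{A}_q$) together with its known collection of test functions $\{\psi_\alpha : \alpha \in \mathbb{T}\}$ falls under the hypotheses of \cref{Main}, so that the equivalence of \textbf{[H]}, \textbf{[D2]}, and \textbf{[R2]} is essentially a direct specialization. First I would verify the standing hypothesis of \cref{Main}: in both cases the test functions are holomorphic and there is a distinguished base point at which all test functions vanish --- for $\mathbb{G}$ one checks $\varphi_\alpha(0,0) = 0$ for every $\alpha \in \mathbb{T}$, and for $\mathbb{A}_q$ the normalization makes $\theta_\alpha(c) = 0$ for every $\alpha$. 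With this in place, \cref{Main} immediately gives $\textbf{[H]} \Leftrightarrow \textbf{[D2]} \Leftrightarrow \textbf{[R2]}$, once we identify $C_b(\Psi)$ with $C(\mathbb{T})$: the parametrization $\alpha \mapsto \psi_\alpha$ is a homeomorphism of $\mathbb{T}$ onto $\Psi$ (continuity and injectivity are visible from the explicit formulas, and $\mathbb{T}$ is compact), hence $C_b(\Psi) \cong C(\mathbb{T})$ as $C^*$-algebras, and $E(w)$ corresponds to the function $\alpha \mapsto \psi(\alpha, w)$, i.e. $\psi(\cdot, w) \in C(\mathbb{T})$.

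The remaining content is the extra equivalences $\textbf{[D1]}$ and $\textbf{[R1]}$, which encode the fact that on $\mathbb{G}$ and $\mathbb{A}_q$ one can replace the abstract $*$-representation $\rho : C(\mathbb{T}) \to \mathcal{B}(\mathcal{K})$ by multiplication by a single unitary $\tau$, and the abstract map $h$ by a holomorphic one. The plan is to run the implications $\textbf{[R2]} \Rightarrow \textbf{[R1]} \Rightarrow \textbf{[D1]} \Rightarrow \textbf{[D2]}$. For $\textbf{[R2]} \Rightarrow \textbf{[R1]}$: given a unital $*$-representation $\rho$ of $C(\mathbb{T})$ on $\mathcal{K}$, set $\tau = \rho(\mathrm{id}_{\mathbb{T}})$, a unitary operator whose spectrum lies in $\mathbb{T}$; since $\psi(\cdot, z)$ is (for each fixed $z$) a continuous --- in fact, for $\mathbb{G}$ a rational --- function of $\alpha$ that is holomorphic in $z$, functional calculus gives $\rho(\psi(\cdot, z)) = \psi(\tau, z)$, where $\psi(\tau,z)$ is interpreted via the same functional calculus, e.g. $\varphi_\alpha(s,p) = (2\alpha p - s)(2 - \alpha s)^{-1}$ becomes $(2p\tau - s)(2 - s\tau)^{-1}$, which makes sense as $\|s\tau\| = |s| < 1$ for $(s,p) \in \mathbb{G}$ (and analogously for $\mathbb{A}_q$, using that $\theta_\alpha$ is holomorphic in $\alpha$ on a neighborhood of $\mathbb{T}$ for $z$ fixed in $\mathbb{A}_q$). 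This is exactly the form of the realization formulae for the Schur class on these domains appearing in \cite{SYM_Real, Tirtha-Hari-JFA, DM}, which is why the test-function machinery recovers them. The colligation relations $D^*D = DD^* = I$, $D^*C = B^*$, $C^*C = BB^* = A + A^*$ transfer verbatim.

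For $\textbf{[R1]} \Rightarrow \textbf{[D1]}$: with $U$ and $\tau$ as in $\textbf{[R1]}$, set $h(z) = B(I - D\psi(\tau,z))^{-1}$, which is holomorphic in $z$ because $\psi(\tau, z)$ is holomorphic in $z$ and $\|D\psi(\tau,z)\| < 1$; then the same telescoping computation as in the proof of $\textbf{[R]} \Rightarrow \textbf{[D]}$ in \cref{Main} --- using $BB^* = A + A^*$, $D^*C = B^*$, $D$ unitary --- yields $f(z) + f(w)^* = h(z)(I - \psi(\tau,z)\psi(\tau,w)^*)h(w)^*$, which is $\textbf{[D1]}$. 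Finally $\textbf{[D1]} \Rightarrow \textbf{[D2]}$: given the unitary $\tau$ on $\mathcal{K}$ and the holomorphic $h$, let $\rho : C(\mathbb{T}) \to \mathcal{B}(\mathcal{K})$ be the $*$-representation determined by functional calculus $\rho(g) = g(\tau)$, and define $\Lambda(z,w)(g) = h(z)\rho(g)h(w)^*$; by \cref{KD} (the Kolmogorov decomposition, read backwards) $\Lambda$ is a completely positive kernel, and since $\rho(1 - \psi(\cdot,z)\overline{\psi(\cdot,w)}) = I - \psi(\tau,z)\psi(\tau,w)^*$, we recover $\textbf{[D2]}$. Closing the loop $\textbf{[D2]} \Rightarrow \textbf{[H]}$ is already contained in \cref{Main}.

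The main obstacle --- really the only non-bookkeeping point --- is the identification $C_b(\Psi) \cong C(\mathbb{T})$ together with the compatibility of the two notions of "evaluating $\psi$ at a unitary": one has to check that the element $\rho(\mathrm{id})$ of the abstract representation, when $\rho$ comes from functional calculus on $\mathbb{T}$, reproduces exactly the operators $\psi(\tau, z)$ appearing in the classical realization formulae on $\mathbb{G}$ and $\mathbb{A}_q$, so that $\textbf{[R1]}$ and $\textbf{[R2]}$ are genuinely restatements of each other rather than merely implying one another. For $\mathbb{G}$ this is transparent since $\varphi_\alpha(s,p)$ is a Möbius function of $\alpha$; for $\mathbb{A}_q$ one needs the (standard but slightly delicate) fact that $\alpha \mapsto \theta_\alpha(z)$ extends holomorphically to an annular neighborhood of $\mathbb{T}$ for each fixed $z \in \mathbb{A}_q$, so that $\theta_\alpha(z)$ evaluated at the unitary $\tau$ is unambiguous; this is where I would cite \cite{DM} for the precise analytic properties of the $\theta_\alpha$. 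Everything else is a transcription of the proof of \cref{Main} with $X$ replaced by $\Omega$ and $\rho$ replaced by a single unitary.
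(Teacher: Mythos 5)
Your proposal is correct and the underlying ideas (identification $C_b(\Psi)\cong C(\mathbb T)$, continuous functional calculus for the unitary $\tau$, Kolmogorov decomposition) coincide with the paper's, but you organize the new implications differently, and this is worth flagging. The paper proves the two biconditionals $\textbf{[D1]}\Leftrightarrow\textbf{[D2]}$ and $\textbf{[R1]}\Leftrightarrow\textbf{[R2]}$ directly, each via the Gelfand transform $\rho\colon C(\sigma(\tau))\to C^*(\tau)$ in one direction and $\tau=\rho(\mathrm{id}_{\mathbb T})$ in the other. You instead run the one-directional chain $\textbf{[R2]}\Rightarrow\textbf{[R1]}\Rightarrow\textbf{[D1]}\Rightarrow\textbf{[D2]}$ and close the loop through \cref{Main}. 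This is logically equivalent, but your ordering has a genuine advantage: in the paper's converse direction of $\textbf{[D1]}\Leftrightarrow\textbf{[D2]}$ one applies Kolmogorov to $\Lambda$ to produce a map $h$, and it is not immediate from the abstract decomposition that this $h$ can be taken \emph{holomorphic} as $\textbf{[D1]}$ demands; your route produces $\textbf{[D1]}$ from $\textbf{[R1]}$ with the explicit $h(z)=B\bigl(I-D\psi(\tau,z)\bigr)^{-1}$, where holomorphicity is manifest. So your chain sidesteps a subtlety that the paper's sketch glosses over.

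One small correction to your ``main obstacle'' remark: for the annulus you do not need $\alpha\mapsto\theta_\alpha(z)$ to extend holomorphically to a neighborhood of $\mathbb T$. Since $\tau$ is unitary, $\sigma(\tau)\subseteq\mathbb T$, and the continuous functional calculus $g\mapsto g(\tau)$ is defined for all $g\in C(\sigma(\tau))$; continuity of $\alpha\mapsto\theta_\alpha(z)$ on $\mathbb T$ (which is part of what makes $\Psi$ a compact family in the test-function topology) is already enough to give meaning to $\psi(\tau,z)=\rho\bigl(\psi(\cdot,z)|_{\sigma(\tau)}\bigr)$ unambiguously and compatibly with the classical realization formulae. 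Holomorphy is only needed in the variable $z$, which is supplied by the explicit formulas for $\varphi_\alpha$ and $\theta_\alpha$.
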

\begin{proof}[\textbf{Sketch of the proof:}]
	The proof mainly relies on  \cref{Main}. Since the collection of test functions is parametrized by the circle, so $C_b(\Psi)$ is nothing but the uniform algebra $C(\bT)$.
	First note that the statements \textbf{[H]}, \textbf{[D2]} and \textbf{[R2]} are integral components of  \cref{Main}. We shall demonstrate the following$$\textbf{[D1]}\Leftrightarrow \textbf{[D2]} \text{ and } \textbf{[R1]}\Leftrightarrow \textbf{[R2]}.$$
	
\noindent	$\textbf{[D1]}\Leftrightarrow \textbf{[D2]}:$ This equivalence mainly depends on the Kolmogorov decomposition (Theorem \ref{KD}). Let $\cC(\tau)$ be the unital $\cC^*$-algebra generated by $\tau$ and $\rho:C(\sigma(\tau))\rightarrow \cC(\tau)$ be the inverse of the Gelfand transform. Therefore, $$\rho\left(\psi(\cdot, z)|_{\sigma(\tau)}\right)= \psi(\tau, z).$$ Now consider the completely positive kernel  $\Lambda: \Omega\times \Omega\rightarrow \mathcal{B}\left( C(\mathbb{T}), \mathcal{B}(\cE)\right)$ given by
	$$\Lambda(z, w)(g)= h(z)\rho(g|_{\sigma(\tau)}) h(w)^*$$
	for all $g\in C(\bT)$. Therefore, equation \eqref{D2} can be derived from equation \eqref{D1}. For the converse part, apply Kolmogorov decomposition (Theorem  \ref{KD}) on the kernel $\Lambda$ to get \eqref{D1}.
	
\noindent	$\textbf{[R1]}\Leftrightarrow \textbf{[R2]}:$  This equivalence  follows by noting that representation can be viewed as a unitary operator. Indeed, if we have a unitary operator $\tau$, then consider the represention $\rho$ on $C(\sigma(\tau))$ given by $$\rho\left(\psi(\cdot, z)|_{\sigma(\tau)}\right)= \psi(\tau, z)$$ where $\psi$ is as above. Conversely, if we have a represention $\rho: C(\bT)\rightarrow \mathcal{B}(\cK)$, then consider the unitary operator $\tau= \rho(I_{C(\bT)})$ where $I_{C(\bT)}$ is the identity function on the unit circle.
\end{proof}
We shall end this article with the following remark. 
\begin{remark}
If we aim to find an integral representation, as in equation \eqref{MRea}, for functions in $ \mathfrak{H}(\bG, \cB(\cE))$ that map $(0,0)$ to the identity operator on \( \cE \), we can use lurking isometry arguments instead of Krein space theory. Specifically, any Herglotz class function $f$ on the symmetrized bidisc with $f(0, 0) = I_{\cE} $ can be expressed as:
\begin{align*}
	f(s, p) = V^*\left(I + U \varphi(\tau, (s, p))\right) \left(I - U \varphi(\tau, (s, p))\right)^{-1} V, \quad (s, p) \in \bG,
\end{align*}
for some Hilbert space $ \mathcal{K} $, unitary operators $ \tau$, $U $ on $\mathcal{E} $, an isometry $V $ from $ \cE $ to $\cK$, and $ \varphi(\tau, (s, p)) = (2p \tau - s)(2 - s \tau)^{-1} $. We leave the verification of this result to the reader.

\end{remark}

\textbf{Funding:} The first author's work is supported by the Prime Minister's Research Fellowship PMRF-21-1274.03 and the second author is partially supported by a PIMS postdoctoral fellowship. 

The authors thank the anonymous referee for their comments and valuable suggestions.

\end{document}